\documentclass[letterpaper]{amsart}
\usepackage[headings]{fullpage}
\usepackage{amsmath}
\usepackage{amssymb}
\usepackage{hyperref}
\usepackage{enumitem}
\numberwithin{equation}{section} 
\usepackage{xcolor}

\newtheorem{theorem}{Theorem}[section]
\newtheorem{lemma}[theorem]{Lemma}

\newtheorem{proposition}[theorem]{Proposition}

\theoremstyle{definition}
\newtheorem{definition}[theorem]{Definition}

\newcommand\R{\mathbb{R}}
\newcommand\Z{\mathbb{Z}}

\newcommand{\qtq}[1]{\quad\text{#1}\quad}

\renewcommand{\epsilon}{\varepsilon}

\newcommand{\jb}[1]{\langle#1\rangle}

\newcommand{\rn}[1]{\textup{\uppercase\expandafter{\romannumeral#1}}}




\allowdisplaybreaks

\begin{document}

\title{Dispersive decay for the energy-critical nonlinear wave equation}

\author{Matthew Kowalski}
\address{Department of Mathematics, University of California, Los Angeles, CA 90095, USA}
\email{mattkowalski@math.ucla.edu}

\begin{abstract}
    We prove pointwise-in-time dispersive decay for solutions to the energy-critical nonlinear wave equation in spatial dimension $d = 3$.
\end{abstract}

\maketitle

\section{Introduction}\label{intro}
    \noindent In this paper, we demonstrate dispersive decay for the {\em energy-critical nonlinear wave equation}:
\begin{equation}\label{NLW}\tag{NLW}\begin{cases} 
    \partial_t^2u - \Delta u \pm u^5 = 0 \\ 
    u(0,x) = u_0(x) \in \dot{H}^1(\R^3) \\
    \partial_t u(0,x) = u_1(x) \in L^2(\R^3),
\end{cases}\end{equation}
where $u(t,x)$ is a real-valued function on spacetime $\R_t\times \R^3_x$, $+$ represents the defocusing equation, and $-$ represents the focusing equation.

For dispersive models, our understanding of the long-time behavior of solutions is naturally tied to {\em dispersion}: the property that different frequencies propagate at different velocities.
In classical works, dispersion manifested primarily through pointwise-in-time dispersive decay; see e.g.~\cite{cubic-1986, we-1985, general-1983, cubic-1978, segal}. For the linear half-wave equation, this is exemplified by the dispersive estimate
\begin{equation}\label{intro/non-localized linear dispersive decay}
        \|e^{\pm i t |\nabla|} f \|_{L^p} \lesssim |t|^{-(d-1)(\frac{1}{2} - \frac{1}{p})} \||\nabla|^{(d+1)(\frac{1}{2} - \frac{1}{p})}f\|_{L^{p'}} \quad \text{for } t \neq 0 \text{ and } 2 \leq p < \infty.
\end{equation}
For nonlinear models, consistent with the well-posedness theory at that time, these classical dispersive decay estimates required high-regularity initial data.
In contemporary works, dispersion has shifted to appear through Strichartz estimates and the resulting spacetime bounds; see e.g.~\cite{eNLS-wp-d3,focusing-GWP,spacetime-bounds-tao} as well as Theorems \ref{sb/strichartz} and \ref{sb/sb}.

Given recent successes in scaling-critical well-posedness and scattering (see e.g.~Theorem \ref{well-posedness}), a string of authors \cite{mNLS-dd, cNLS-dd-constant, eNLS-dd-H3, eNLS-dd, eNLS-dd-H1+, mgKdV-dd} has returned to these pointwise-in-time estimates and sought to lower the regularity required for dispersive decay.
In \cite{mNLS-dd,eNLS-dd}, these efforts successfully demonstrated dispersive decay for the mass-critical and energy-critical nonlinear Schr\"odinger equations under minimal hypotheses.
Subsequent progress has focused on the mass-critical gKdV equation in \cite{mgKdV-dd}.
For a full history of dispersive decay, we direct the interested reader to the aforementioned string of authors.

The (non)linear wave equation has a rich history of decay estimates which we do not attempt to fully recount here. For decay estimates of \eqref{NLW} on nonstationary spacetimes under stronger regularity and decay hypotheses, we direct the reader to the work of Shi-Zhuo Looi in \cite{looi-2, looi-1} and references therein.
For work on dispersive estimates for various wave equations, we direct the interested reader to \cite{we-1980,we-1985,general-1983,looi-2,rel-we-1972, segal} and references therein.

Before stating our main theorem, let us briefly recall the well-posedness and scattering theory for this model. 
The initial-value problem \eqref{NLW} is {\em energy-critical}: the scaling symmetry, $u(t,x) \mapsto \lambda^\frac{1}{2} u(\lambda t, \lambda x)$, preserves the (conserved) energy
\begin{equation*}
    E(u,\partial_t u) = \tfrac{1}{2} \int_{\R^3} |\nabla u|^2 + |\partial_t u|^2 - \tfrac{1}{3} |u|^6 dx,
\end{equation*}
as well as the $\dot{H}^1 \times L^2$ norm. 
Global well-posedness in the energy-space $\dot{H}^1 \times L^2$ was established for the defocusing equation by a culmination of many authors; see \cite{defocusing-Bahouri, defocusing-Grillakis,defocusing-Grillakis-2,defocusing-Sha-Struwe,defocusing-Sha-Struwe-2,defocusing-Struwe} and references therein. For the focusing equation, global well-posedness was established by Kenig and Merle in \cite{focusing-GWP}.
For a history of well-posedness up to their respective times, see \cite{shatah-struwe, sogge}.
For a recent installment, see \cite{spacetime-bounds-tao}.

We recall the necessary well-posedness results in the following theorem \cite{defocusing-Bahouri, defocusing-Grillakis, defocusing-Grillakis-2, focusing-GWP, defocusing-Sha-Struwe, defocusing-Sha-Struwe-2,defocusing-Struwe}:
\begin{theorem}\label{well-posedness}
    Let there exist $(u_0, u_1) \in \dot{H}^1 \times L^2(\R^3)$. In the focusing case, suppose that $(u_0,u_1)$ satisfies
    \begin{equation*}
        E(u_0,u_1) < E(W,0) \qtq{and} \|u_0\|_{\dot{H}^1} < \|W\|_{\dot{H}^1},
    \end{equation*}
    where
    $W(x) = \big( 1 + \tfrac{1}{3} |x|^2\big)^{-\frac{1}{2}}$
    is a stationary solution to \eqref{NLW}.
    Then there exists a unique global solution $u \in C_t\dot{H}^1_x(\R_t \times \R_x^3)$ to \eqref{NLW} with initial data $(u_0,u_1)$ which satisfies
    \begin{equation}\label{intro/bound}
        \|u\|_{L_{t,x}^8(\R \times \R^3)} \leq C(E(u_0,u_1)).
    \end{equation}
    Moreover, there exist scattering states $(u_0^{\pm},u_1^{\pm}) \in \dot{H}^1 \times L^2$ such that
    \begin{equation}\label{intro/scattering}
        \lim_{t \to \pm \infty} \Big\|u(t) - \cos(t|\nabla|) u_0^\pm - \tfrac{\sin(t|\nabla|)}{|\nabla|}u_1^\pm\Big\|_{\dot{H}^1} = 0.
    \end{equation}
\end{theorem}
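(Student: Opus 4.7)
The plan is to handle the local theory uniformly and then split into the defocusing and focusing cases for the global analysis. First I would establish local well-posedness in $\dot{H}^1 \times L^2$ by writing solutions in Duhamel form and running a contraction mapping on the Strichartz space $L^8_{t,x}$. H\"older places the nonlinearity $u^5$ in a dual Strichartz norm at the energy-critical scaling, and the free-wave Strichartz estimates (Theorem \ref{sb/strichartz}) close the iteration on short time intervals. Small data thus yield global solutions and scattering directly; the challenge is extending this to arbitrary data.

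For the defocusing case, I would follow the program of Struwe, Grillakis, Shatah-Struwe, and Bahouri. Energy conservation supplies uniform $\dot{H}^1 \times L^2$ control, but criticality means iteration alone cannot globalize. The key tool is a Morawetz-type estimate which, together with finite speed of propagation, rules out energy concentration at any spacetime point; Struwe's radial argument, combined with a Bahouri-G\'erard profile decomposition, then upgrades finite-time regularity to the global spacetime bound \eqref{intro/bound}. For the focusing case, I would invoke the Kenig-Merle concentration-compactness and rigidity machinery: the threshold conditions $E(u_0,u_1) < E(W,0)$ and $\|u_0\|_{\dot{H}^1} < \|W\|_{\dot{H}^1}$ supply, via the sharp Sobolev inequality, coercive lower bounds that trap the solution strictly below the ground state. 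Arguing by contradiction, one extracts a minimal-energy nonscattering solution whose trajectory is precompact in $\dot{H}^1 \times L^2$ modulo scaling and translation symmetries, and then eliminates it via a localized virial/Morawetz identity. The scattering conclusion \eqref{intro/scattering} follows once \eqref{intro/bound} is in hand: slicing the $L^8_{t,x}$ bound into finitely many Strichartz-small intervals shows that the Duhamel integrals from $t$ to $\pm\infty$ converge in $\dot{H}^1 \times L^2$, yielding the asymptotic data $(u_0^\pm, u_1^\pm)$.

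The principal obstacle in both regimes is the scaling criticality: energy conservation alone provides no scale on which to iterate, and small-data perturbation theory cannot be bootstrapped without further geometric input. In the defocusing case this is resolved by Morawetz estimates and non-concentration; in the focusing case by ground-state coercivity combined with rigidity of the compact almost-periodic profile. The most delicate step is the profile decomposition machinery required to pass from compactness statements to actual uniform bounds, since at energy-critical scaling one must track concentration simultaneously in position, time, and scale.
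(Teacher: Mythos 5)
The paper does not prove this theorem; it is stated as a recollection of known results and cited directly to the listed references (Struwe, Grillakis, Shatah--Struwe, Bahouri--Shatah/G\'erard for the defocusing case, and Kenig--Merle for the focusing case). Your sketch is an accurate high-level account of the arguments in those sources: Strichartz-based local theory and small-data scattering, defocusing globalization via the energy-flux/Morawetz non-concentration argument upgraded to the uniform $L^8_{t,x}$ bound by profile decomposition, the Kenig--Merle concentration-compactness and rigidity scheme below the ground-state threshold for the focusing case, and scattering as a direct consequence of \eqref{intro/bound}. One small point worth being precise about: the classical Struwe/Grillakis/Shatah--Struwe arguments yield global regularity and finiteness of $\|u\|_{L^8_{t,x}}$, but the \emph{uniform} dependence of that bound on the energy alone, as asserted in \eqref{intro/bound}, genuinely requires the profile-decomposition or induction-on-energy machinery (Bahouri--G\'erard, Nakanishi, Tao) that you allude to, so that step should not be treated as an afterthought.
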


In the theorem to follow, we show dispersive decay analogous to \eqref{intro/non-localized linear dispersive decay} for solutions to \eqref{NLW} in the energy-space $\dot{H}^1\times L^2$. We believe that this theorem is the optimal nonlinear analogue of the dispersive decay \eqref{intro/non-localized linear dispersive decay}. Notably, it requires only that initial data lie in the energy-space $\dot{H}^1 \times L^2$, it exhibits a linear dependence on the initial data, and it has constants that depend only on the size of the initial data.
\begin{theorem}\label{theorem}
    Fix $2 < p < \infty$. Given $(u_0,u_1) \in \big(\dot{H}^{2 - \frac{4}{p},p'} \times \dot{H}^{1 - \frac{4}{p},p'}\big) \cap (\dot{H}^1 \times L^2)(\R^3)$ which satisfies the hypotheses of Theorem \ref{well-posedness}, let $u(t)$ denote the corresponding global solution to \eqref{NLW}. Then
    \begin{equation}\label{theorem/decay}
        \|u(t)\|_{L^p} \lesssim C\big(\|(u_0,u_1)\|_{\dot{H}^1 \times L^2},p\big)|t|^{-(1 - \frac{2}{p})} \Big[\big\||\nabla|^{2 - \frac{4}{p}}u_0\big\|_{L^{p'}} + \big\||\nabla|^{1 - \frac{4}{p}}u_1\big\|_{L^{p'}}\Big],
    \end{equation}
    uniformly for $t \neq 0$. 
    A stronger, Besov version of \eqref{theorem/decay} is shown within the proof; see \eqref{proof/stronger}.
\end{theorem}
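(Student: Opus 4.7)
The plan is to combine the linear dispersive estimate \eqref{intro/non-localized linear dispersive decay} with the global spacetime Strichartz control provided by Theorem \ref{well-posedness}, via a paraproduct/frequency-localized analysis of the Duhamel integral. I would first establish the frequency-localized refinement
\[ \|e^{\pm it|\nabla|} P_N f\|_{L^p} \lesssim \min\bigl(1,\, |t|^{-(1-2/p)}\bigr)\, N^{2-4/p}\, \|P_N f\|_{L^{p'}}, \]
which, after dyadic summation in Besov spaces of the form $\dot B^{s}_{p',2}$, yields the claimed bound (and indeed its stronger Besov version \eqref{proof/stronger}) for the linear part $\cos(t|\nabla|)u_0 + \tfrac{\sin(t|\nabla|)}{|\nabla|}u_1$ of the solution. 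It then remains to control the Duhamel integral
\[ \mathcal D(u)(t) = \mp \int_0^t \tfrac{\sin((t-s)|\nabla|)}{|\nabla|}\, u^5(s)\,ds \]
in $L^p$ with the same decay rate.

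For the Duhamel term I would apply $P_N$, invoke the frequency-localized dispersive estimate inside the integral, and expand $P_N u^5$ using a paraproduct decomposition into the dominant high-low piece $P_N u \cdot (P_{\lesssim N} u)^4$ together with the high-high pieces featuring several high-frequency inputs. By Theorem \ref{well-posedness}, $\R$ can be partitioned into finitely many subintervals on each of which $\|u\|_{L^8_{t,x}}$ is as small as needed; on each such subinterval, standard perturbative arguments promote this control to full Strichartz bounds on $u$ in the admissible range of norms. These Strichartz estimates would absorb four of the five copies of $u$ in each paraproduct, leaving a single factor of $u(s)$ on which to run the decay analysis.

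The proof closes with a bootstrap in a weighted norm of the form $\sup_{0 < s \leq t} s^{1-2/p}\|u(s)\|_{\dot B^{0}_{p,2}}$. I would split the $s$-integral into the far regime $s \in [0,t/2]$, where $|t-s| \sim t$ and the dispersive estimate directly supplies the desired $|t|^{-(1-2/p)}$ factor, and the near regime $s \in [t/2, t]$, which is handled using the bootstrap decay hypothesis on $u(s)$ together with integrable bounds for the singularity $|t-s|^{-(1-2/p)}$ extracted from suitable Strichartz norms. I expect the main obstacle to be the near-time contribution: the dispersive singularity $|t-s|^{-(1-2/p)}$ is only marginally integrable over short times, so one must precisely balance the paraproduct exponents, the fractional derivative $|\nabla|^{1-4/p}$ distributed by a fractional Leibniz rule, and the bootstrap decay to recover the target rate. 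Additional care is needed when summing dyadic frequencies in the Besov sense, since low-frequency outputs may be fed by high-frequency inputs through the nonlinearity and must be reassembled without losing the $N^{2-4/p}$ gains from the dispersive estimate.
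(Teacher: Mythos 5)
Your overall strategy matches the paper's: recall the Duhamel formula, apply the frequency-localized dispersive estimate \eqref{intro/linear dispersive decay}, expand $P_N(u^5)$ with a paraproduct, split the time integral into $[0,t/2]$ and $[t/2,t]$, decompose $[0,\infty)$ into finitely many subintervals on which a Strichartz norm of $u$ is small, and close with a bootstrap. However, there are two concrete gaps.

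First, the ``frequency-localized refinement'' you state,
\begin{equation*}
\|e^{\pm it|\nabla|}P_N f\|_{L^p}\lesssim \min\bigl(1,|t|^{-(1-2/p)}\bigr)N^{2-4/p}\|P_N f\|_{L^{p'}},
\end{equation*}
does not follow from \eqref{intro/linear dispersive decay} and is in fact false: for $|t|\leq 1$ and $N|t|\gg 1$, the true bound is $N^{3-6/p}\jb{tN}^{-(1-2/p)}\approx N^{2-4/p}|t|^{-(1-2/p)}$, which exceeds your $N^{2-4/p}$. You must keep the $\jb{tN}$ structure.

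Second, and more importantly, the bootstrap norm $\sup_{0<s\leq t} s^{1-2/p}\|u(s)\|_{\dot B^0_{p,2}}$ is too weak to close the argument. The paper instead uses
\begin{equation*}
\|u\|_{X(T)}=\Bigl\|\bigl(\tfrac{\jb{tN}}{N}\bigr)^{1-\frac{2}{p}}u_N\Bigr\|_{\ell^2_N L^\infty_t L^p_x([0,T))},
\end{equation*}
with the time supremum \emph{inside} the $\ell^2_N$ sum and, crucially, a per-frequency weight carrying an extra factor of $N^{-(1-2/p)}$. That factor is not cosmetic: after applying the $|\nabla|^{-1}$ from the sine propagator and the dispersive estimate with $d=3$, the Duhamel term produces $N^{2-6/p}$, and the $X$-norm weight brings this down to $N^{1-4/p}$, which is precisely what makes the subsequent dyadic power-counting work. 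Moreover, when the second-highest frequency $u_{N_2}$ is extracted and fed back into the bootstrap, its own weight $(\jb{sN_2}/N_2)^{1-2/p}$ must already be present in the norm; the pure $s^{1-2/p}$ weight does not provide this, so for $s\sim t$ and large $N$ you would be left with an uncontrolled factor growing like $N^{1-2/p}$ near $s=t$. Relatedly, you do not address the logarithmic loss $\log(N_1/N_3)^{1/2}$ that arises from Cauchy--Schwarz in the $N_2$-sum and must be absorbed via Schur's test; this is exactly where the restriction $p<\infty$ enters. Without it, the step from \eqref{proof/infty failure 1} to \eqref{proof/I} fails, which is the reason Theorem \ref{theorem} excludes $p=\infty$.
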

Absent from Theorem \ref{theorem} is decay in $L^\infty$. This restriction arises from the high number of derivatives present in the right-hand side of \eqref{intro/linear dispersive decay} which prevents the use of Schur's test to recombine frequencies; see \eqref{proof/infty failure 1} and the surrounding discussion. 
If we restrict our attention to initial data in the stronger Besov space $\dot{B}^2_{1,1} \times \dot{B}^1_{1,1}$ (see Definition \ref{intro/besov}), we recover full dispersive decay in $L^\infty$:
\begin{theorem}\label{edge theorem}
    Given $(u_0,u_1) \in (\dot{B}^2_{1,1} \times \dot{B}^1_{1,1}) \cap (\dot{B}^1_{2,1} \times \dot{B}^0_{2,1})$ which satisfies the hypotheses of Theorem \ref{well-posedness}, let $u(t)$ denote the corresponding global solution to \eqref{NLW}. Then
    \begin{equation}\label{edge theorem/decay}
        \|u(t)\|_{L_x^\infty} \leq C\big(\|(u_0,u_1)\|_{\dot{B}^1_{2,1} \times \dot{B}^0_{2,1}}\big)|t|^{-1} \Big[\big\|u_0\big\|_{\dot{B}^2_{1,1}} +\big\|u_1\big\|_{\dot{B}^1_{1,1}}\Big],
    \end{equation}
    uniformly for $t \neq 0$. A stronger version of \eqref{edge theorem/decay} is shown within the proof; see \eqref{edge/stronger}.
\end{theorem}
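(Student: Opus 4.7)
The plan is to adapt the argument used for Theorem \ref{theorem} to the endpoint $p = \infty$ by working in Besov spaces; the $\ell^1$-summability built into $\dot{B}^0_{\infty,1}$ precisely patches the failure of Schur's test alluded to after Theorem \ref{theorem} which prevents one from setting $p = \infty$ directly in \eqref{theorem/decay}.

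First I would apply Duhamel's formula
\begin{equation*}
    u(t) = \cos(t|\nabla|) u_0 + \tfrac{\sin(t|\nabla|)}{|\nabla|} u_1 \mp \int_0^t \tfrac{\sin((t-s)|\nabla|)}{|\nabla|} u^5(s)\,ds
\end{equation*}
and decompose $u = \sum_N P_N u$ via a Littlewood--Paley resolution, targeting the norm $\dot{B}^0_{\infty,1} \hookrightarrow L^\infty$. The linear evolution is handled directly by the frequency-localized dispersive bound $\|P_N e^{\pm it|\nabla|} f\|_{L^\infty} \lesssim |t|^{-1} N^2 \|P_N f\|_{L^1}$; summing over dyadic $N$ returns exactly the $|t|^{-1}(\|u_0\|_{\dot{B}^2_{1,1}} + \|u_1\|_{\dot{B}^1_{1,1}})$ right-hand side, with the analogous bound for the $\sin(t|\nabla|)/|\nabla|$ propagator losing one power of $N$.

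For the Duhamel term I would run a bootstrap on
\begin{equation*}
    M(T) := \sup_{0 < |t| \leq T} |t|\,\|u(t)\|_{\dot{B}^0_{\infty,1}},
\end{equation*}
aiming to bound $M(T)$ uniformly in $T$. Splitting $\int_0^t = \int_0^{t/2} + \int_{t/2}^t$ for each dyadic block $P_N$, I would estimate the early portion via the frequency-localized dispersive bound $\|P_N \tfrac{\sin((t-s)|\nabla|)}{|\nabla|} u^5(s)\|_{L^\infty} \lesssim t^{-1} N\,\|P_N u^5(s)\|_{L^1}$, sum in $N$ to obtain $t^{-1}\int_0^{t/2} \|u^5(s)\|_{\dot{B}^1_{1,1}}\,ds$, and then invoke a Besov Moser-type product estimate splitting the five factors of $u$ between $\dot{B}^1_{2,1}$ and $\dot{B}^0_{\infty,1}$ so that the bootstrap assumption $\|u(s)\|_{\dot{B}^0_{\infty,1}} \leq M(T) s^{-1}$ renders the time integral convergent. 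The tail portion $\int_{t/2}^t$, where the dispersive weight $|t-s|^{-1}$ is non-integrable, would instead be handled by short-time Strichartz combined with Sobolev embedding and the smallness $\|u(s)\|_{\dot{B}^0_{\infty,1}} \lesssim M(T)/t$ valid on that window. Summing the two contributions and choosing $M(T)$ large relative to the data yields the desired bootstrap closure.

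The main obstacle I expect is the Besov product estimate for $u^5$: unlike in Theorem \ref{theorem}, one cannot afford to absorb all five copies of $u$ into $L^\infty$ without derivative overflow, so a careful paraproduct decomposition balancing the $\dot{B}^1_{2,1}$ energy-type norm against the bootstrap norm $\dot{B}^0_{\infty,1}$ will be essential. A closely related subtlety is establishing that $\|u(t)\|_{\dot{B}^1_{2,1}}$ is uniformly bounded in $t$; this should follow from a persistence-of-regularity argument driven by the global Strichartz bound \eqref{intro/bound} of Theorem \ref{well-posedness}, lifted to the Besov scale via $\ell^1$-summation of dyadic Strichartz norms.
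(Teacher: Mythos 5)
Your overall skeleton — Duhamel, $\ell^1_N$ Besov summation, a bootstrap on a decaying norm, and a split $\int_0^{t/2}+\int_{t/2}^t$ — matches the paper's, and you correctly identify the boundedness of $\|u(t)\|_{\dot B^1_{2,1}}$ as a needed ingredient (this is Proposition~\ref{sb/sb}). But your treatment of the early-time integral has a genuine gap, and the paper's fix is precisely the content of its proof.

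You propose to bound $t^{-1}\int_0^{t/2}\|u^5(s)\|_{\dot B^1_{1,1}}\,ds$ by a Moser-type product estimate together with the bootstrap decay $\|u(s)\|_{\dot B^0_{\infty,1}}\lesssim M(T)\,s^{-1}$, claiming this renders the time integral convergent. It does not. Given only $u\in \dot H^1\cap L^\infty$, the sharpest product bound of that form is essentially $\|u^5\|_{\dot B^1_{1,1}}\lesssim\|u\|_{\dot B^1_{2,1}}\|u\|_{L^6}^3\|u\|_{\dot B^0_{\infty,1}}$: after placing $\nabla u$ in $L^2$, the H\"older/interpolation budget $1=\tfrac12+\tfrac36$ for $u^4\in L^2$ is saturated by three $L^6$ factors and leaves room for exactly one factor of $L^\infty$. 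The integrand therefore decays no faster than $\langle s\rangle^{-1}$, and $\int_0^{t/2}\langle s\rangle^{-1}\,ds$ produces a $\log t$ loss, not the $O(1)$ needed to close the bootstrap. Replacing the bootstrap weight by a black-box Strichartz bound runs into the parallel problem that the natural $\gamma=\tfrac12$ Strichartz norm is $\||\nabla|^{1/2}u\|_{L^4_{t,x}}$, not $\|u\|_{L^4_{t,x}}$, so there is an irreducible half-derivative mismatch.

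The paper resolves this at the frequency-localized level, which is invisible once you pass to a Moser estimate. Its bootstrap norm is $\|\tfrac{\jb{tN}}{N}u_N\|_{\ell^1_NL^\infty_tL^\infty_x}$ rather than $\sup_t|t|\|u(t)\|_{\dot B^0_{\infty,1}}$; the weight $\jb{tN}/N$ behaves well uniformly in $N$ including small $tN$, unlike the bare factor $|t|$. The key step is then to insert, inside the early-time integral with $N_5\le\dots\le N_1$, the factor $\tfrac{\jb{sN_5}}{N_5}\cdot\tfrac{N_4}{\jb{sN_4}}\ge 1$. The first piece lifts $u_{N_5}$ into the bootstrap norm; the second, $\tfrac{N_4}{\jb{sN_4}}$, provides the missing time integrability by itself (it lies in $L^2_s$ with norm $\sim N_4^{1/2}$), and that $N_4^{1/2}$ is exactly what converts $\|u_{N_2}\|_{L^4_{s,x}}$ into the admissible $\||\nabla|^{1/2}u_{N_2}\|_{L^4_{s,x}}$. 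In short, the convergence of the early-time integral comes from the dispersive kernel itself, frequency by frequency, not from the bootstrap decay of $u$. Separately, on $[t/2,t)$, where $\jb{(t-s)N}^{-1}$ is also non-integrable, the paper uses the full paraproduct decomposition \eqref{intro/paraproduct/full} to split into $N_1\sim N_2\gtrsim N$ (handled via $L^2_s$ H\"older on the kernel, yielding an admissible $N^{1/2}$) and the degenerate output case $N_1\sim N$, which is handled without dispersion at all via Bernstein and $L^2$-boundedness of the propagator. Your plan of "short-time Strichartz combined with Sobolev embedding" for this region glosses over that distinction and would not account for the low-output-frequency case.
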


Our interest in this particular model is to expand the methods presented in \cite{mNLS-dd,eNLS-dd} and inspire continued work in this area.
In contrast to \cite{mNLS-dd,eNLS-dd}, whose arguments are based on spacetime bounds in Lorentz spaces, we instead rely on frequency-localized estimates and careful summation.
These new methods are necessary to address the loss of derivatives in \eqref{intro/non-localized linear dispersive decay}.

It is well known that the linear half-wave equation, and hence the linear wave equation, enjoys a dispersive decay in $L^p$ for $2 \leq p \leq \infty$ of the form
\begin{equation}\label{intro/linear dispersive decay}
    \big\| e^{\pm i t |\nabla|} P_N f\big\|_{L^p} \lesssim N^{d(1 - \frac{2}{p})} (1 + |t|N)^{-(d-1)(\frac{1}{2} - \frac{1}{p})} \|P_N f\|_{L^{p'}},
\end{equation}
uniformly for $t \in \R$ and $N \in 2^\Z$ where $P_N$ is a Littlewood--Paley projection (see \eqref{intro/Littlewood--Paley}).
Due to the nonlinearity, frequency-localized decay of the form \eqref{intro/linear dispersive decay} is not possible for solutions to \eqref{NLW}; indeed, this fails for any initial data with finite Fourier support. Instead, from the boundedness of the Littlewood--Paley square function \eqref{intro/square function} we immediately gain the linear dispersive decay \eqref{intro/non-localized linear dispersive decay}, of which Theorem \ref{theorem} is a nonlinear analogue.



Absent from \eqref{intro/non-localized linear dispersive decay}, and correspondingly from Theorem \ref{theorem}, is dispersive decay in $L^\infty$. Though present in \eqref{intro/linear dispersive decay}, square function estimates are insufficient to recover $L^\infty$ decay of the form \eqref{intro/non-localized linear dispersive decay}. Instead, \eqref{intro/linear dispersive decay} and countable subadditivity imply the weaker result:
\begin{equation}\label{intro/Linfty decay}
    \|e^{\pm i t |\nabla|} f \|_{L^\infty} \lesssim \|e^{\pm i t |\nabla|} f\|_{\dot{B}^0_{\infty,1}} \lesssim |t|^{-\frac{d-1}{2}} \|f\|_{\dot{B}^\frac{d+1}{2}_{1,1}},
\end{equation}
of which Theorem \ref{edge theorem} is the nonlinear analogue.

    

    \subsection*{Acknowledgements}
    The author was supported in part by NSF grants DMS-2154022 and DMS-2054194.
    The author is grateful to Monica Vi\c{s}an and Rowan Killip for their continued guidance and confidence.

    \subsection*{Notation and Preliminaries}

We use the standard notation $A \lesssim B$ to indicate that $A \leq C B$ for some universal constant $C > 0$ that may change from line to line. If both $A \lesssim B$ and $B \lesssim A$ then we use the notation $A \sim B$. When the implied constant fails to be universal, the relevant dependencies will be indicated within the text or included as subscripts on the symbol.




We define the {\em Littlewood--Paley projections} as follows: Let $\varphi$ denote a smooth bump function supported on $\{|\xi| \leq 2\}$ such that $\varphi(\xi) = 1$ for $|\xi| \leq 1$. For dyadic numbers $N \in 2^\Z$, we then define $P_{\leq N}$, $P_{> N}$, and $P_N$ as
\begin{equation}\begin{split}\label{intro/Littlewood--Paley}
    \widehat{P_{\leq N} f} & = \varphi(\xi/N) \widehat{f}(\xi) \\
    \widehat{P_{> N} f} & = [1-\varphi(\xi/N)] \widehat{f}(\xi) \\
    \widehat{P_N f}(\xi) & = [\varphi(\xi/N) - \varphi(2\xi/N)] \widehat{f}(\xi).
\end{split}\end{equation}
We will often denote $P_N f = f_N$, $P_{\leq N}f = f_{\leq N}$ and $P_{>N}f = f_{>N}$. When a function $f(t,x)$ depends on both time and space, we let $f_N(t,x)$ denote the Littlewood--Paley projection of $f$ in only the spatial variable $x$.

As Fourier multipliers, the Littlewood--Paley projections commute with derivative operators and the half-wave propagator $e^{\pm i t |\nabla|}$. Moreover, they are bounded on $L^p$ for all $1 \leq p \leq \infty$ and on $\dot{H}^s$ for all $s \in \R$. For $1 < p < \infty$, we have that
\begin{equation*}
    \sum_{N \in 2^\Z} P_N f \to f \quad\text{in $L^p$}.
\end{equation*}
In addition, $P_N$, $P_{\leq N}$, and $P_{>N}$ are bounded pointwise by a constant multiple of the Hardy--Littlewood maximal function,
\begin{equation*}
    |P_N f| + |P_{\leq N} f| \lesssim Mf,
\end{equation*}
uniformly for $N \in 2^\Z$.

Associated to the Littlewood--Paley projections are the {\em Bernstein inequalities}, which state
\begin{equation}\begin{split}\label{intro/bernstein}
    \||\nabla|^s P_N f\|_{L^p} & \sim N^s \|P_N f\|_{L^p} \\
    \||\nabla|^s P_{\leq N} f\|_{L^p} & \lesssim N^s \|P_N f\|_{L^p} \\
    \|P_N f\|_{L^p},\|P_{\leq N} f\|_{L^p} & \lesssim N^{d(\frac{1}{q} - \frac{1}{p})} \|P_N f\|_{L^q}
\end{split}\end{equation}
for all $s \geq 0$, $1 \leq q \leq p \leq \infty$ and uniformly for $N \in 2^\Z$.
Additionally, we have the Littlewood--Paley square function estimate which states
\begin{equation}\label{intro/square function}
    \big\|\|f_N\|_{\ell_N^2}\big\|_{L^p} \sim \|f\|_{L^p},
\end{equation}
for $1 < p < \infty$.

For Theorem \ref{edge theorem}, it will be necessary to consider initial data in homogeneous Besov spaces. We recall the definition here:
\begin{definition}\label{intro/besov}
    Fix $d \geq 1$, $1 \leq p,q \leq \infty$, and $s \in \R$. The homogeneous Besov space $\dot{B}^s_{p,q}$ is the completion of the Schwartz functions $\mathcal{S}(\R^d)$ with respect to the norm
    \begin{equation*}
        \|f\|_{\dot{B}^s_{p,q}} = \big\| N^s\|f_N(x)\|_{L^p_x(\R^d)}\big\|_{\ell_N^q(2^\Z)}.
    \end{equation*}
    We note that the Bernstein inequalities \eqref{intro/bernstein} imply that the factor $N^s$ acts as $|\nabla|^s$.
\end{definition}

For our Besov analysis, it will be necessary to consider the Littlewood--Paley projection of the product of functions. We recall two simple paraproduct decompositions in the following lemma:
\begin{lemma}[Paraproduct decompositions]\label{intro/paraproduct}
    Suppose that $f \in L^2$. Then
    \begin{align}
        |P_N(f^5)| 
        & \lesssim \sum_{\substack{N_1 \sim N_2 \gtrsim N \\ N_5 \leq \dots \leq N_1}}M\big[f_{N_1}f_{N_2}f_{N_3}f_{N_4}f_{N_5}\big] + \sum_{\substack{N_1 \sim N \\ N_5 \leq \dots \leq N_1}}M\big[f_{N_1}f_{N_2}f_{N_3}f_{N_4}f_{N_5}\big] \label{intro/paraproduct/full}\\
        & \lesssim \sum_{\substack{N_1 \gtrsim N \\ N_5 \leq \dots \leq N_1}}M\big[f_{N_1}f_{N_2}f_{N_3}f_{N_4}f_{N_5}\big], \label{intro/paraproduct/simple}
    \end{align}
    uniformly for $N \in 2^\Z$, where $M$ is the Hardy--Littlewood maximal function.
\end{lemma}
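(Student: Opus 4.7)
The plan is to expand $f^5$ via Littlewood--Paley, use the symmetry of the summand to reduce to ordered frequencies, and then exploit Fourier support cancellation to restrict to the nonvanishing interactions; the simpler bound \eqref{intro/paraproduct/simple} will follow by containment of the index sets.

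First, writing $f = \sum_{N \in 2^\Z} f_N$ and expanding, one obtains
\[
f^5 = \sum_{N_1, \ldots, N_5 \in 2^\Z} f_{N_1} f_{N_2} f_{N_3} f_{N_4} f_{N_5},
\]
with convergence ensured by $f \in L^2$ together with the Littlewood--Paley square function estimate. Since $|f_{N_1} \cdots f_{N_5}|$ depends only on the unordered multiset $\{N_1, \ldots, N_5\}$, I would absorb a combinatorial factor of at most $5!$ into the implicit constant and restrict to the ordered region $N_5 \leq N_4 \leq N_3 \leq N_2 \leq N_1$. It then remains to identify which ordered tuples give a nonvanishing contribution to $P_N(f_{N_1} \cdots f_{N_5})$.

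The Fourier support of $f_{N_1} \cdots f_{N_5}$ is contained in $\{\xi_1 + \cdots + \xi_5 : |\xi_i| \sim N_i\}$, while $P_N$ has Fourier support in $\{|\xi| \sim N\}$. A first triangle inequality $|\xi| \leq \sum |\xi_i| \leq 5 N_1$ forces $N_1 \gtrsim N$. If moreover $N_1 \gg N$, then $\xi_1$ (of magnitude $\sim N_1$) must be essentially cancelled by $\xi_2 + \cdots + \xi_5$, whose magnitude is at most $4 N_2$; this forces $N_1 \lesssim N_2$, hence $N_1 \sim N_2$. Consequently only two types of tuples contribute: the low--high case $N_1 \sim N_2 \gtrsim N$, and the high--low case $N_1 \sim N$.

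Finally, applying the standard pointwise bound $|P_N g| \lesssim Mg$ (valid because the convolution kernel of $P_N$ is dominated by a Schwartz bump) to each surviving term yields \eqref{intro/paraproduct/full}, and \eqref{intro/paraproduct/simple} follows at once since both index sets in \eqref{intro/paraproduct/full} are contained in $\{N_5 \leq \cdots \leq N_1, \; N_1 \gtrsim N\}$. I do not expect any serious obstacle; this is a routine paraproduct computation, and the main care required is tracking the $\sim$-constants in the cancellation step and justifying interchanges of $P_N$ with the multi-sum, both of which are standard given $f \in L^2$.
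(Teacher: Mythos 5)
Your proof is correct. The paper states Lemma~\ref{intro/paraproduct} without proof, presenting it as a recall of a standard fact, and your argument---expanding $f^5$ via Littlewood--Paley, symmetrizing onto the ordered region $N_5\leq\cdots\leq N_1$ at the cost of a factor $\leq 5!$, using Fourier-support constraints to show that either $N_1\sim N$ or ($N_1\gg N$ forcing $N_1\sim N_2$), and then applying the pointwise bound $|P_Ng|\lesssim Mg$---is precisely the expected standard derivation, with \eqref{intro/paraproduct/simple} following immediately from containment of index sets.
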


We use $L_t^p L_x^q(T \times X)$ to denote the mixed Lebesgue spacetime norm
\begin{equation*}
    \|f\|_{L_t^p L_x^q(T\times X)} = \big\| \|f(t,x)\|_{L^q(X,dx)} \big\|_{L^p(T,dt)} = \bigg[ \int_T \bigg(\int_X |f(t,x)|^q dx\bigg)^{p/q} dt\bigg]^{1/p}.
\end{equation*}
When $p = q$, we let $L^p_{t,x} = L_t^p L_x^p$. When $X = \R^d$, we let $L_t^pL_x^q(T) = L_t^pL_x^q(T\times\R^d)$. 
Similar to this notation, we use $\ell_N^r L_t^p L_x^q(I \times T \times X)$ to denote the mixed Besov spacetime norm
\begin{equation*}
    \|f\|_{\ell_N^r L_t^p L_x^q(I \times T \times X)} = \Big\| \big\| \| f_N(t,x)\|_{L_x^q(X,dx)} \big\|_{L_t^p(T,dt)}\Big\|_{\ell_N^q(I)}.
\end{equation*}
When $I = 2^\Z$ and $X = \R^d$, we let $\ell_N^r L_t^p L_x^q(T) = \ell_N^r L_t^p L_x^q(2^\Z \times T \times \R^d)$. We note that other sources may use the notation $\tilde{L}_t^p \dot{B}^0_{q,r} = \ell_N^r L_t^p L_x^q$; see e.g.~\cite{besov-notation}.

Motivated by the linear dispersive decay \eqref{intro/linear dispersive decay}, we define $\jb{x} = 1 + |x|$.
With this, the linear dispersive decay \eqref{intro/linear dispersive decay} will be often rewritten as
\begin{equation*}
    \big\| e^{\pm i t |\nabla|} P_N f\big\|_{L^p} \lesssim N^{d(1 - \frac{2}{p})} \jb{tN}^{-(d-1)(\frac{1}{2} - \frac{1}{p})} \|P_N f\|_{L^{p'}}.
\end{equation*}
We note that $\jb{x}^{-\alpha}$ is integrable over $\R^d$ for all $\alpha > d$.

\section{Spacetime Bounds}\label{sb}
    Central to our analysis will be estimates of solutions to \eqref{NLW} in various mixed spacetime norms. In the usual way, for a range of admissible exponents, control over these mixed spacetime norms will follow from the bound \eqref{intro/bound} and Strichartz estimates. 

We begin by recalling the admissible exponents and Strichartz estimates.
\begin{definition}[Wave-admissible]
    We say that a pair $(p,q) \in [2,\infty]\times[2,\infty)$ is {\em wave-admissible} if 
    \begin{align*}
        \tfrac{1}{p} + \tfrac{1}{q} \leq \tfrac{1}{2}.
    \end{align*}
\end{definition}
\begin{theorem}[Strichartz estimates]\label{sb/strichartz}
    Suppose that $p,q$ is a wave-admissible pair such that $\frac{1}{p} + \frac{3}{q} = \frac{1}{2} + \gamma$ for some $\gamma$. Then for all spacetime slabs $J \times \R^3$, the half-wave map satisfies
    \begin{equation*}
        \| e^{\pm i t |\nabla|} |\nabla|^{\gamma}f\|_{L_t^p L_x^q(J)} \lesssim \|f\|_{\dot{H}^1}.
    \end{equation*}
    Moreover, for all wave-admissible pairs $(\tilde{p},\tilde{q})$ with $\frac{1}{\tilde{p}} + \frac{3}{\tilde{q}} = \frac{1}{2} + \tilde{\gamma}$ for some $\tilde{\gamma}$,
    \begin{equation*}
        \bigg\| \int_0^t e^{-(t-s)\partial_x^3} |\nabla|^{\gamma} F(s) ds \bigg\|_{L_t^pL_x^q(J)} \lesssim \big\||\nabla|^{-\tilde{\gamma}} F\big\|_{L_t^{\tilde{p}'}L_x^{\tilde{q}'}}.
    \end{equation*}
\end{theorem}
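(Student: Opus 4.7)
The plan is to derive both estimates via the standard Keel--Tao framework, starting from the frequency-localized dispersive bound \eqref{intro/linear dispersive decay} together with the energy identity $\|e^{\pm it|\nabla|}f\|_{L_x^2} = \|f\|_{L_x^2}$. (Note that the propagator appearing in the inhomogeneous estimate should of course read $e^{\pm i(t-s)|\nabla|}$ rather than the KdV propagator written above.) Throughout, the scaling relation $\tfrac{1}{p}+\tfrac{3}{q}=\tfrac{1}{2}+\gamma$ will be tracked by counting derivatives at each step.

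For the homogeneous estimate, I would interpolate the dispersive decay with the $L^2$ isometry to obtain, for any wave-admissible pair $(p,q)$ with $q<\infty$, a kernel bound of the form
\begin{equation*}
    \big\|e^{\pm i(t-s)|\nabla|}|\nabla|^{2\gamma} g\big\|_{L_x^q} \lesssim |t-s|^{-2(\frac{1}{2}-\frac{1}{q})}\|g\|_{L_x^{q'}}.
\end{equation*}
A standard $TT^*$ reduction then converts the homogeneous Strichartz estimate into the claim
\begin{equation*}
    \bigg\|\int |t-s|^{-2(\frac{1}{2}-\frac{1}{q})}\|F(s)\|_{L_x^{q'}}\,ds\bigg\|_{L_t^p} \lesssim \|F\|_{L_t^{p'}L_x^{q'}},
\end{equation*}
which away from the Keel--Tao endpoint $(p,q)=(2,\infty)$ is a direct consequence of the Hardy--Littlewood--Sobolev inequality in time. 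The wave-admissibility condition $\tfrac{1}{p}+\tfrac{1}{q}\leq\tfrac{1}{2}$ is exactly what makes the HLS exponents legal, while the derivative count $|\nabla|^{2\gamma}$ distributes as $|\nabla|^\gamma$ on each factor after the $TT^*$ identification, recovering the desired $\|f\|_{\dot H^1}$ bound on the right.

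For the inhomogeneous estimate, I would first combine the two homogeneous Strichartz bounds at exponents $(p,q)$ and $(\tilde p,\tilde q)$ via duality and Minkowski to obtain the unretarded estimate
\begin{equation*}
    \bigg\|\int_{\R}e^{\pm i(t-s)|\nabla|}|\nabla|^{\gamma}F(s)\,ds\bigg\|_{L_t^pL_x^q} \lesssim \big\||\nabla|^{-\tilde\gamma}F\big\|_{L_t^{\tilde p'}L_x^{\tilde q'}},
\end{equation*}
and then upgrade to the retarded integral $\int_0^t$ via the Christ--Kiselev lemma, which applies whenever $p>\tilde p'$. The remaining diagonal case $(p,q)=(\tilde p,\tilde q)$ is handled directly by the $TT^*$ argument without Christ--Kiselev.

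The only potential obstacle is the exclusion of the Keel--Tao endpoint $q=\infty$, but this is ruled out by the standing assumption $q\in[2,\infty)$ built into the definition of wave-admissibility. Consequently the full endpoint machinery is not needed and the proof reduces to classical HLS and $TT^*$, so I expect the argument to be routine and would simply sketch the main steps while citing the classical references.
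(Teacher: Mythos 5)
The paper does not actually prove this theorem: it is recalled as a classical result (attributed implicitly to the standard Strichartz/Keel--Tao literature) and immediately applied, so there is no internal proof to compare against. That said, your sketch is the standard route and is essentially the argument one would give. You are also right that $e^{-(t-s)\partial_x^3}$ in the inhomogeneous estimate is a typo for the wave propagator $\tfrac{\sin((t-s)|\nabla|)}{|\nabla|}$ (or $e^{\pm i(t-s)|\nabla|}$ for the half-wave reduction).

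Two technical points deserve care before the sketch would pass as a proof. First, the derivative exponent in your kernel bound is off: with $T = e^{\pm it|\nabla|}|\nabla|^{\gamma-1}\colon L^2\to L^p_tL^q_x$, the $TT^*$ kernel carries $|\nabla|^{2\gamma-2}$, not $|\nabla|^{2\gamma}$, and substituting into the $d=3$ dispersive estimate the derivative count closes (i.e.\ $4(\tfrac12-\tfrac1q)+2\gamma-2=0$) precisely when $\tfrac1p+\tfrac1q=\tfrac12$. Thus your HLS step, as written, only yields the \emph{sharp} admissible pairs; the strictly admissible pairs $\tfrac1p+\tfrac1q<\tfrac12$ are not reached directly by the untruncated kernel bound. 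You can repair this either by Sobolev embedding in $x$ from a sharp admissible pair, or by working with the frequency-localized estimate \eqref{intro/linear dispersive decay}, whose $\jb{tN}^{-(d-1)(\frac12-\frac1q)}$ factor is integrable in $t$ at fixed $N$ and then summing over $N$; the latter is closer to how the paper actually uses dispersion elsewhere. Second, your use of Christ--Kiselev for the retarded integral is fine, and the diagonal case $p=\tilde p=2$ you flag is correctly handled by the $TT^*$ identity itself; just note that in $d=3$ the Keel--Tao endpoint $(p,q)=(2,\infty)$ is genuinely excluded by the standing restriction $q<\infty$, as you observe.
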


From the Strichartz estimates, a standard bootstrap argument extends the bound \eqref{intro/bound} to all wave-admissible pairs with a suitable number of derivatives. As the proof is standard, we do not recreate it here; instead, we direct the reader to \cite{eNLS-wp-d3,eNLS-wp-d4} for examples of the argument applied to the nonlinear Schr\"odinger equation. For the Besov improvement, we similarly direct the reader to \cite{eNLS-dd}.

\begin{proposition}[Mixed spacetime bounds]\label{sb/sb}
    Given $(u_0, u_1) \in \dot{H}^1 \times L^2$ which satisfies the hypotheses of Theorem \ref{well-posedness}, let $u(t)$ denote the corresponding global solution to \eqref{NLW}. Suppose that $p,q$ is a wave-admissible pair such that $\frac{1}{p} + \frac{3}{q} = \frac{1}{2} + \gamma$ for some $\gamma$. Then
    \begin{equation*}
        \big\||\nabla|^{\gamma} u\big\|_{L_t^p L_x^q} \lesssim \big\||\nabla|^{\gamma} u\big\|_{\ell^2_NL_t^p L_x^q} \leq C\big(\|(u_0,u_1)\|_{\dot{H}^1 \times L^2}\big).
    \end{equation*}
    Moreover, if $(u_0, u_1) \in \dot{B}^1_{2,1} \times \dot{B}^0_{2,1}$, then
    \begin{equation*}
        \big\||\nabla|^{\gamma} u\big\|_{\ell^1_N L_t^p L_x^q} \leq C\big(\|(u_0,u_1)\|_{\dot{B}^1_{2,1} \times \dot{B}^0_{2,1}}\big).
    \end{equation*}
\end{proposition}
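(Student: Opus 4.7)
The first inequality, $\||\nabla|^\gamma u\|_{L_t^p L_x^q} \lesssim \||\nabla|^\gamma u\|_{\ell^2_N L_t^p L_x^q}$, is immediate from the Littlewood--Paley square function estimate \eqref{intro/square function} and Minkowski's inequality, which is available since $p, q \geq 2$. The substance of the proposition thus lies in the two $\ell^r_N L_t^p L_x^q$ estimates.

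My plan is the standard Strichartz bootstrap whose details the paper defers to \cite{eNLS-wp-d3,eNLS-wp-d4,eNLS-dd}. Using \eqref{intro/bound}, I would first partition $\R$ into $K = K(E)$ consecutive intervals $J_k = [t_{k-1}, t_k]$ on which $\|u\|_{L_{t,x}^8(J_k)} \leq \eta$, for a small constant $\eta = \eta(E)$ to be determined. On each $J_k$, applying $|\nabla|^\gamma P_N$ to the Duhamel formula for \eqref{NLW}, taking the $L_t^p L_x^q(J_k)$ norm, and then $\ell^2_N$, the Strichartz estimates of Theorem \ref{sb/strichartz} together with \eqref{intro/square function} yield
\[
\bigl\||\nabla|^\gamma u\bigr\|_{\ell^2_N L_t^p L_x^q(J_k)} \lesssim \|u(t_{k-1})\|_{\dot{H}^1} + \|\partial_t u(t_{k-1})\|_{L^2} + \bigl\||\nabla|^{\gamma - 1 - \tilde{\gamma}} P_N(u^5)\bigr\|_{\ell^2_N L_t^{\tilde{p}'} L_x^{\tilde{q}'}(J_k)}
\]
for a suitable dual wave-admissible pair $(\tilde{p}, \tilde{q})$. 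Energy conservation, together with the hypotheses of Theorem \ref{well-posedness}, bounds the first two terms uniformly in $k$ by $C(E)$. For the nonlinear term I would invoke the paraproduct decomposition \eqref{intro/paraproduct/simple} and H\"older's inequality to place four factors of $u$ in $L_{t,x}^8(J_k)$ while the remaining factor carries the $\ell^2_N$ sum over $N_1 \gtrsim N$, collapsed via Schur's test. This produces a bound of the form $\eta^4 \cdot \||\nabla|^\gamma u\|_{\ell^2_N L_t^p L_x^q(J_k)}$, which smallness of $\eta$ absorbs into the left-hand side. Summing the resulting $C(E)$ bound over the $K(E)$ intervals yields the global estimate.

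For the Besov improvement under $(u_0, u_1) \in \dot{B}^1_{2,1} \times \dot{B}^0_{2,1}$, I would rerun the identical scheme with $\ell^1_N$ in place of $\ell^2_N$ throughout. The linear evolution is bounded by the initial data's Besov norm; the Duhamel term is handled by splitting via the sharper decomposition \eqref{intro/paraproduct/full} into the high-high-to-low case $N_1 \sim N_2 \gtrsim N$ and the diagonal case $N_1 \sim N$, and then collapsing the $\ell^1_N$ sum over $N_1 \gtrsim N$ by a geometric series in $N/N_1$ obtained from Bernstein's inequalities \eqref{intro/bernstein}. The main obstacle is exactly this $\ell^1_N$ bookkeeping: unlike $\ell^2_N$, $\ell^1_N$ does not accommodate Schur's test directly, so one must verify that every interaction term supplies a genuine geometric factor in $N/N_1$ before summing.
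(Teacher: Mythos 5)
The paper does not actually prove Proposition~\ref{sb/sb}; it explicitly defers to \cite{eNLS-wp-d3,eNLS-wp-d4,eNLS-dd} for the standard Strichartz bootstrap. Your outline is therefore being compared not against a written proof but against the canonical argument the paper has in mind, and the broad strokes of your proposal --- square function plus Minkowski for the first inequality, partition of $\R$ into finitely many intervals of small $L^8_{t,x}$ norm via \eqref{intro/bound}, Strichartz on each interval, paraproduct decomposition, absorption by smallness, iteration over intervals --- are exactly that argument.

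There is, however, a concrete gap in the frequency bookkeeping for the nonlinear term, in both the $\ell^2_N$ and $\ell^1_N$ cases. Applying \eqref{intro/paraproduct/simple} produces a sum over five-tuples $N_5 \le \dots \le N_1$ with $N_1 \gtrsim N$, and after you take the norm and use the triangle inequality you must genuinely \emph{sum} over the four lower frequencies $N_2,\dots,N_5$ as well as over $N_1$. If, as you propose, the four low-frequency factors are placed in $L^8_{t,x}(J_k)$, the geometric sums $\sum_{N_i \le N_1} \|u_{N_i}\|_{L^8_{t,x}}$ simply do not converge from the bound $\|u\|_{L^8_{t,x}} \le \eta$, since that controls neither $\|u\|_{\ell^1_N L^8_{t,x}}$ nor anything summable. (Separately, the resulting dual pair $(\tilde p,\tilde q)$ need not be wave-admissible for this split.) The standard fix, which the paper itself uses in the proof of Theorem~\ref{theorem} (see \eqref{proof/infty failure 1} and the Bernstein step that precedes it, exploiting the carefully tuned pair $(q,r)$), is to place the low-frequency factors in a space that is slightly over-admissible and recover a small positive power $N_i^{\epsilon}$ via the Bernstein inequalities~\eqref{intro/bernstein}; only then do the $N_2,\dots,N_5$ sums close, and the leftover $N_1^{-c\epsilon}$ factor feeds into the Schur kernel. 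Your sketch silently assumes this summability and so, as written, the absorption step does not follow.

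Two smaller remarks. First, Schur's test applies to $\ell^1_N$ just as well as to $\ell^2_N$ (the Schur row/column conditions give boundedness on $\ell^p$ for every $1 \le p \le \infty$), so the dichotomy you draw in the last paragraph is not the real distinction between the two cases; what differs is only that the $\ell^1_N$ bound requires $\ell^1_N$ control of the linear flow, hence the Besov hypothesis on the data. Second, the exact power $\eta^4$ you announce is an artifact of the chosen split and would change with the corrected exponents, but that is harmless since any positive power of $\eta$ suffices for absorption.
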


\section{Proof of Theorem \ref{theorem}}\label{proof}
    We now prove Theorem \ref{theorem}. This proof will follow the overall bootstrap structure of \cite{mNLS-dd,eNLS-dd}. However, in contrast to \cite{mNLS-dd, eNLS-dd}, we do not use Lorentz spaces to control the decay terms; instead, we use the frequency-localized dispersive decay \eqref{intro/linear dispersive decay} and careful summation.

\begin{proof}[Proof of Theorem \ref{theorem}]
    It suffices to work with $t > 0$ as $t < 0$ will follow from time-reversal symmetry. By the density of Schwartz functions in $(\dot{H}^{2 - \frac{4}{p},p'} \times \dot{H}^{1 - \frac{4}{p},p'}) \cap (\dot{H}^1 \times L^2)$, it suffices to consider Schwartz solutions of \eqref{NLW}.

    To take advantage of the additional decay in \eqref{intro/linear dispersive decay}, we seek to prove the stronger decay result
    \begin{equation}\label{proof/stronger}
        \Big\|\big(\tfrac{\jb{tN}}{N}\big)^{1-\frac{2}{p}}u_N\Big\|_{\ell_N^2 L_t^\infty L_x^p([0,\infty))} \leq C\big(\|(u_0,u_1)\|_{\dot{H}^1 \times L^2}\big)\Big[\big\||\nabla|^{2 - \frac{4}{p}}u_0\big\|_{L^{p'}} + \big\||\nabla|^{1 - \frac{4}{p}}u_1\big\|_{L^{p'}}\Big].
    \end{equation}
    For $0 < T \leq \infty$, we define the norm
    \begin{equation*}
        \|u\|_{X(T)} = \Big\|\big(\tfrac{\jb{tN}}{N}\big)^{1-\frac{2}{p}}u_N\Big\|_{\ell_N^2 L_t^\infty L_x^p([0,T))}.
    \end{equation*}
    It then suffices to show
    \begin{equation}\label{proof/conclusion}
        \|u\|_{X(\infty)} \leq C\big(\|(u_0,u_1)\|_{\dot{H}^1 \times L^2}\big)\Big[\big\||\nabla|^{2 - \frac{4}{p}}u_0\big\|_{L^{p'}} + \big\||\nabla|^{1 - \frac{4}{p}}u_1\big\|_{L^{p'}}\Big],
    \end{equation}
    for which we proceed with a bootstrap argument.

    Let $\eta > 0$ denote a small parameter to be chosen later depending only on universal constants. A quick calculation shows that 
    \begin{equation*}
    (q,r) = \Big(\tfrac{3p^2}{3p-2},\tfrac{18p^2}{3p^2-6p+4}\Big)
    \end{equation*}
    is a wave-admissible pair with $\gamma = \frac{1}{p} + \frac{3}{q} - \frac{1}{2} = 0$. Proposition \ref{sb/sb} then implies that we may decompose $[0,\infty)$ into $J = J(\|(u_0,u_1)\|_{\dot{H^1}\times L^2},\eta)$ many intervals $I_j = [T_{j-1},T_j)$ on which
    \begin{equation}\label{proof/smallness}
        \big\| u\big\|_{\ell_N^2 L_s^q L_x^r(I_j)} < \eta.
    \end{equation}
    
    For all $j = 1,\dots, J$, we aim to show the bootstrap statement:
    \begin{equation}\begin{split}\label{proof/bootstrap}
        \|u\|_{X(T_j)} & \lesssim \big\||\nabla|^{2 - \frac{4}{p}}u_0\big\|_{L^{p'}} + \big\||\nabla|^{1 - \frac{4}{p}}u_1\big\|_{L^{p'}} + C(\|(u_0,u_1)\|_{\dot{H^1}\times L^2})\|u\|_{X(T_{j-1})} + \eta^4 \|u\|_{X(T_j)}.
    \end{split}\end{equation}
    Choosing $\eta > 0$ sufficiently small based on the constants in \eqref{proof/bootstrap}, we could then iterate over the norms $X(T_j)$ for $j = 1,\dots, J(\|(u_0,u_1)\|_{\dot{H^1}\times L^2})$ to yield \eqref{proof/conclusion} and conclude the proof of Theorem \ref{theorem}.

    We therefore focus our attention on \eqref{proof/bootstrap}. Recall the Duhamel formula:
    \begin{equation}\begin{split}\label{proof/duhamel}
        u(t) & = \cos(t|\nabla|) u_0 + \tfrac{\sin(t|\nabla|)}{|\nabla|} u_1 \mp \int_0^t \tfrac{\sin((t-s)|\nabla|)}{|\nabla|} u^5(s) ds \\
        & = \cos(t|\nabla|) u_0 + \tfrac{\sin(t|\nabla|)}{|\nabla|} u_1 + \text{NL}.
    \end{split}\end{equation}
    By the linear dispersive decay \eqref{intro/linear dispersive decay} and the Littlewood-Paley square function estimate, the contribution of the linear term to $\|u(t)\|_{X(T_j)}$ is immediately seen to be acceptable:
    \begin{equation}\label{proof/linear}
        \big\|\cos(t|\nabla|) u_0 + \tfrac{\sin(t|\nabla|)}{|\nabla|} u_1\big\|_{X(T_j)} \lesssim \big\||\nabla|^{2 - \frac{4}{p}}u_0\big\|_{L^{p'}} + \big\||\nabla|^{1 - \frac{4}{p}}u_1\big\|_{L^{p'}}.
    \end{equation}
    We thus focus on the nonlinear correction $\text{NL}$.

    By the linear dispersive decay \eqref{intro/linear dispersive decay} and the Bernstein inequality \eqref{intro/bernstein}, we may estimate
    \begin{equation*}\begin{split}
        \big\|\text{NL}\|_{X(T_j)}
        & \lesssim \bigg\|N^{1 - \frac{4}{p}} \sup_{t \in [0,T_j)} \jb{tN}^{1-\frac{2}{p}} \int_0^t \jb{(t-s)N}^{-(1-\frac{2}{p})} \big\|P_N u^5(s)\big\|_{L_x^{p'}}ds \bigg\|_{\ell_N^2} \\
        & \lesssim \bigg\|N^{1 - \frac{1}{p}}\sup_{t \in [0,T_j)}\jb{tN}^{1-\frac{2}{p}} \int_0^t \jb{(t-s)N}^{-(1-\frac{2}{p})} \big\|P_N u^5(s)\big\|_{L_x^1}ds \bigg\|_{\ell_N^2}.
    \end{split}\end{equation*}
    The $\ell^1_N \hookrightarrow \ell^2_N$ embedding and the simple paraproduct decomposition \eqref{intro/paraproduct/simple} then imply that
    \begin{align*}
        \big\|\text{NL}\|_{X(T_j)} & \lesssim \sum_{\substack{N_1\gtrsim N \\ N_5 \leq \dots \leq N_1}}N^{1 - \frac{1}{p}} \sup_{t \in [0,T_j)} \jb{tN}^{1-\frac{2}{p}} \int_0^t \jb{(t-s)N}^{-(1-\frac{2}{p})} \big\|u_{N_1} u_{N_2} u_{N_3} u_{N_4} u_{N_5}\big\|_{L_x^1} ds.
    \end{align*}
    
    We now decompose $[0,t)$ into $[0,t/2) \cup [t/2,t)$. For $s \in [0,t/2)$, we note that $|t-s| \sim |t|$ and for $s \in [t/2,t)$, we note that $|s| \sim |t|$. We may then estimate
    \begin{align*}
        \big\|\text{NL}\|_{X(T_j)} & \lesssim \sum_{\substack{N_1\gtrsim N \\ N_5 \leq \dots \leq N_1}}N^{1 - \frac{1}{p}} \sup_{t \in [0,T_j)} \int_0^{t/2}
        \big\|u_{N_1} u_{N_2} u_{N_3} u_{N_4} u_{N_5}\big\|_{L_x^1} ds \\
        & \hspace{11pt} + \sum_{\substack{N_1\gtrsim N \\ N_5 \leq \dots \leq N_1}} N^{1 - \frac{1}{p}} \sup_{t \in [0,T_j)}\int_{t/2}^t\jb{(t-s)N}^{-(1-\frac{2}{p})} \jb{sN}^{1- \frac{2}{p}}\big\|u_{N_1} u_{N_2} u_{N_3} u_{N_4} u_{N_5}\big\|_{L_x^1} ds.
    \end{align*}
    Applying H\"older's inequality and taking the supremum over $t \in [0,T_j)$, direct calculation then implies that
    \begin{align*}
        \big\|\text{NL}\|_{X(T_j)} & \lesssim \sum_{\substack{N_1\gtrsim N \\ N_5 \leq \dots \leq N_1}} N^{1-\frac{1}{p}} \big\|\jb{sN}^{-(1-\frac{2}{p})}\big\|_{L_s^\frac{p^2}{(p-1)(p-2)}} \bigg\|\jb{sN}^{1- \frac{2}{p}}\big\|u_{N_1} u_{N_2} u_{N_3} u_{N_4} u_{N_5}\big\|_{L_x^1}\bigg\|_{L_s^{\frac{q}{3}}([0,T_j))} \\
        & \sim \sum_{\substack{N_1\gtrsim N \\ N_5 \leq \dots \leq N_1}}N^{\frac{2}{p} - \frac{2}{p^2}}\bigg\|\jb{sN}^{1- \frac{2}{p}}\big\|u_{N_1} u_{N_2} u_{N_3} u_{N_4} u_{N_5}\big\|_{L_x^1}\bigg\|_{L_s^{\frac{q}{3}}([0,T_j))}.
    \end{align*}
    Bounding $\jb{sN} \lesssim \jb{sN_1}$ and then summing over $\{N : N \lesssim N_1\}$, we find that
    \begin{equation*}\begin{split}
        \big\|\text{NL}\|_{X(T_j)} & \lesssim \sum_{N_5 \leq \dots \leq N_1}N_1^{\frac{2}{p} - \frac{2}{p^2}}\Big\|\jb{sN_1}^{1- \frac{2}{p}}\big\|u_{N_1} u_{N_2} u_{N_3} u_{N_4} u_{N_5}\big\|_{L_x^1}\Big\|_{L_s^{\frac{q}{3}}([0,T_j))} \\
        & = \sum_{N_5 \leq \dots \leq N_1}N_1^{1 - \frac{2}{p^2}}\bigg\|\tfrac{\jb{sN_1}^{1- \frac{2}{p}}}{N_1^{1-\frac{2}{p}}}\big\|u_{N_1} u_{N_2} u_{N_3} u_{N_4} u_{N_5}\big\|_{L_x^1}\bigg\|_{L_s^{\frac{q}{3}}([0,T_j))}.
    \end{split}\end{equation*}
    As $N_1 \geq N_2$, this implies that
    \begin{align*}
        \big\|\text{NL}\|_{X(T_j)} & \lesssim \sum_{N_5 \leq \dots \leq N_1}N_1^{1 - \frac{2}{p^2}}\bigg\|\tfrac{\jb{sN_2}^{1- \frac{2}{p}}}{N_2^{1-\frac{2}{p}}}\|u_{N_2}\|_{L_x^p}\big\|u_{N_1} u_{N_3} u_{N_4} u_{N_5}\big\|_{L_x^{p'}}\bigg\|_{L_s^{\frac{q}{3}}([0,T_j))}.
    \end{align*}
    
    
    We now decompose $[0,T_j)$ into $[0,T_{j-1}) \cup I_j$. We apply H\"older's inequality to the sum over $\{N_2 \in 2^\Z : N_3 \leq N_2 \leq N_1\}$ in order to place $u_{N_2}(s)$ into the bootstrap norm $X(T_j)$ for $s \in I_j$ and into the bootstrap norm $X(T_{j-1})$ for $s \in [0,T_{j-1})$. We may then estimate
    \begin{equation}\begin{split}\label{proof/NL}
        \big\|\text{NL}\|_{X(T_j)} 
        & \lesssim \|u\|_{X(T_j)}\sum_{N_5 \leq N_4 \leq N_3 \leq N_1}\log\big(\tfrac{N_1}{N_3}\big)^{\frac{1}{2}}N_1^{1 - \frac{2}{p^2}}\big\|u_{N_1} u_{N_3} u_{N_4} u_{N_5}\big\|_{L_s^{\frac{q}{3}}L_x^{p'}(I_j)} \\
        & \hspace{11pt} + \|u\|_{X(T_{j-1})}\sum_{N_5 \leq N_4 \leq N_3 \leq N_1}\log\big(\tfrac{N_1}{N_3}\big)^{\frac{1}{2}}N_1^{1 - \frac{2}{p^2}}\big\|u_{N_1} u_{N_3} u_{N_4} u_{N_5}\big\|_{L_s^{\frac{q}{3}}L_x^{p'}} \\
        & = \|u\|_{X(T_j)} \rn{1} + \|u\|_{X(T_{j-1})}\rn{2}.
    \end{split}\end{equation}
    
    We first focus our attention on $\rn{1}$. The estimates for $\rn{2}$ will then follow a symmetric argument. By H\"older's inequality, we may estimate
    \begin{align*}
        \rn{1} & \lesssim \sum_{N_5 \leq N_4 \leq N_3 \leq N_1}\log\big(\tfrac{N_1}{N_3}\big)^{\frac{1}{2}}N_1^{-\frac{2}{p^2}}\|N_1 u_{N_1}\|_{L_s^\infty L_x^2}\prod_{i=3}^5\big\|u_{N_i}\big\|_{L_s^{q}L_x^\frac{6p}{p-2}(I_j)}.
    \end{align*}
    Applying the Bernstein inequality \eqref{intro/bernstein} and then summing over $\{N_4, N_5 : N_5 \leq N_4 \leq N_3$, we find that
    \begin{equation}\begin{split}\label{proof/infty failure 1}
        \rn{1} 
        & \lesssim \sum_{N_5 \leq N_4 \leq N_3 \leq N_1}\log\big(\tfrac{N_1}{N_3}\big)^{\frac{1}{2}}N_1^{-\frac{2}{p^2}}\|N_1 u_{N_1}\|_{L_s^\infty L_x^2}\prod_{i=3}^5N_i^{\frac{2}{3p^2}}\big\|u_{N_i}\big\|_{L_s^{q}L_x^r(I_j)} \\
        & \lesssim \big\|u\big\|^2_{\ell_N^\infty L_s^{q}L_x^r(I_j)}\sum_{N_3 \leq N_1}\log\big(\tfrac{N_1}{N_3}\big)^{\frac{1}{2}}N_3^{\frac{2}{p^2}}N_1^{-\frac{2}{p^2}}\|N_1 u_{N_1}\|_{L_s^\infty L_x^2}\big\|u_{N_3}\big\|_{L_s^{q}L_x^r(I_j)}.
    \end{split}\end{equation}
    Because $p < \infty$, Schur's test and standard arguments then imply that
    \begin{equation}\begin{split}\label{proof/I}
        \rn{1} 
        & \;\lesssim\; \big\|u\big\|^2_{\ell_N^\infty L_s^{q}L_x^r(I_j)}\||\nabla| u\|_{\ell_N^2 L_s^\infty L_x^2}\big\|u\big\|_{\ell_N^2L_s^{q}L_x^r(I_j)}
        \;\lesssim\; C\big(\|(u_0,u_1)\|_{\dot{H}^1 \times L^2}\big)\eta^4.
    \end{split}\end{equation}

    Repeating the above analysis for $\rn{2}$, we may similarly estimate
    \begin{align}\label{proof/II}
        \rn{2}
        & \;\lesssim\; \|u\|^2_{\ell_N^\infty L_s^{q}L_x^r}\big\||\nabla| u\big\|_{\ell_N^2 L_s^\infty L_x^2}\|u\|_{\ell_N^2L_s^{q}L_x^r}
        \;\lesssim\; C\big(\|(u_0,u_1)\|_{\dot{H}^1 \times L^2}\big).
    \end{align}
    Combining the estimates \eqref{proof/NL}, \eqref{proof/I}, and \eqref{proof/II} with the linear estimate \eqref{proof/linear} then yields the bootstrap statement \eqref{proof/bootstrap}. Along with earlier considerations, this concludes the proof of Theorem \ref{theorem}.
\end{proof}

\section{Proof of Theorem \ref{edge theorem}}\label{edge}
    We now prove Theorem \ref{edge theorem}. This proof will parallel the proof of Theorem \ref{theorem} but will require a more careful paraproduct decomposition.

\begin{proof}[Proof of Theorem \ref{edge theorem}]
    It suffices to work with $t > 0$ as $t < 0$ will follow from time-reversal symmetry. By the density of Schwartz functions in $(\dot{B}^{2}_{1,1} \times \dot{B}^1_{1,1}) \cap (\dot{B}^1_{2,1} \times \dot{B}^0_{2,1})$, it suffices to consider Schwartz solutions of \eqref{NLW}.

    We seek to prove the stronger decay result:
    \begin{equation}\label{edge/stronger}
        \Big\|\tfrac{\jb{tN}}{N}u_N\Big\|_{\ell_N^1 L_t^\infty L_x^p([0,\infty))} \leq C\big(\|(u_0,u_1)\|_{\dot{B}^1_{2,1} \times \dot{B}^0_{2,1}}\big)\Big[\|u_0\|_{\dot{B}^2_{1,1}} + \|u_1\|_{\dot{B}^1_{1,1}}\Big].
    \end{equation}
    Note that the sum over frequencies is $\ell^1_N$ as opposed to $\ell_N^2$ in \eqref{proof/stronger}.
    For $0 < T \leq \infty$, we then define the norm
    \begin{equation*}
        \|u\|_{X(T)} = \Big\|\tfrac{\jb{tN}}{N}u_N\Big\|_{\ell_N^2 L_t^\infty L_x^p([0,T))}.
    \end{equation*}
    It then suffices to show
    \begin{equation}\label{edge/conclusion}
        \|u\|_{X(\infty)} \leq C\big(\|(u_0,u_1)\|_{\dot{B}^1_{2,1} \times \dot{B}^0_{2,1}}\big)\big[\|u_0\|_{\dot{B}^2_{1,1}} + \|u_1\|_{\dot{B}^1_{1,1}}\big],
    \end{equation}
    for which we proceed with a bootstrap argument.

    Let $\eta > 0$ denote a small parameter to be chosen later, depending only on universal constants and $\|(u_0,u_1)\|_{\dot{B}^1_{2,1} \times \dot{B}^0_{2,1}}$. Proposition \ref{sb/sb} implies that we may decompose $[0,\infty)$ into $J = J(\|(u_0,u_1)\|_{\dot{B}^1_{2,1} \times \dot{B}^0_{2,1}},\eta)$ many intervals $I_j = [T_{j-1},T_j)$ on which
    \begin{equation}\label{edge/smallness}
        \big\| |\nabla|^{\frac{1}{2}} u\big\|_{\ell_N^1 L^4_{t,x}(I_j)} + \big\|u\big\|_{\ell_N^1 L^8_{t,x}(I_j)} + \big\|u\big\|_{\ell_N^1 L_t^4 L_x^{12}(I_j)} < \eta.
    \end{equation}
    
    For all $j = 1,\dots, J$, we aim to show the bootstrap statement:
    \begin{equation}\begin{split}\label{edge/bootstrap}
        \|u\|_{X(T_j)} & \lesssim \|u_0\|_{\dot{B}^2_{1,1}} + \|u_1\|_{\dot{B}^1_{1,1}} + C\big(\|(u_0,u_1)\|_{\dot{B}^1_{2,1} \times \dot{B}^0_{2,1}}\big)\Big[\|u\|_{X(T_{j-1})} + \eta^3 \|u\|_{X(T_j)}\Big].
    \end{split}\end{equation}
    Choosing $\eta > 0$ sufficiently small based on the constants in \eqref{edge/bootstrap}, we could then iterate over the norms $X(T_j)$ for $j = 1,\dots, J(\|(u_0,u_1)\|_{\dot{B}^1_{2,1} \times \dot{B}^0_{2,1}})$ to yield \eqref{edge/conclusion} and conclude the proof of Theorem \ref{theorem}.
    
    We therefore focus on \eqref{edge/bootstrap}. As before, we recall the Duhamel formula \eqref{proof/duhamel} and note that the contribution of the linear term to $\|u(t)\|_{X(T_j)}$ is acceptable:
    \begin{equation}\label{edge/linear}
        \big\|\cos(t|\nabla|) u_0 + \tfrac{\sin(t|\nabla|)}{|\nabla|} u_1\big\|_{X(T_j)} \lesssim \|u_0\|_{\dot{B}^2_{1,1}} + \|u_1\|_{\dot{B}^1_{1,1}}.
    \end{equation}
    We thus focus our attention on the nonlinear correction $\text{NL}$.

    We decompose $[0,t)$ into $[0,t/2) \cup [t/2, t)$. For the early time interval $[0,t/2)$, we apply the simple paraproduct decomposition \eqref{intro/paraproduct/simple} and then the linear dispersive decay \eqref{intro/linear dispersive decay}. For the late time interval $[t/2,t)$, we apply the full paraproduct decomposition \eqref{intro/paraproduct/full} and consider the two terms separately.
    For $N_1 \gtrsim N$, we apply \eqref{intro/linear dispersive decay} directly.
    For $N_1 \sim N$, we first apply the Bernstein inequality \eqref{intro/bernstein} and then apply the conservation of mass. Estimating in this manner, we may bound the nonlinear correction $\|\text{NL}\|_{X(T_j)}$ by three terms,
    \begin{align*}
        \|\text{NL}\|_{X(T_j)} 
        & \lesssim \sum_{\substack{N_1 \gtrsim N \\ N_5 \leq \dots \leq N_1}} N \sup_{t \in [0,T_j)}\jb{tN} \int_0^{t/2} \jb{(t-s)N}^{-1} \big\|u_{N_1}u_{N_2}u_{N_3}u_{N_4}u_{N_5}(s)\big\|_{L_x^1}ds \\
        & \hspace{11pt} + \sum_{\substack{N_1 \sim N_2 \gtrsim N \\ N_5 \leq \dots \leq N_1}} N \sup_{t \in [0,T_j)}\jb{tN} \int_{t/2}^t \jb{(t-s)N}^{-1} \big\|u_{N_1}u_{N_2}u_{N_3}u_{N_4}u_{N_5}(s)\big\|_{L_x^1}ds \\
        & \hspace{11pt} + \sum_{\substack{N_1 \sim N \\ N_5 \leq \dots \leq N_1}} N^{-\frac{1}{2}} \sup_{t \in [0,T_j)}\jb{tN} \int_{t/2}^t \big\|u_{N_1}u_{N_2}u_{N_3}u_{N_4}u_{N_5}(s)\big\|_{L_x^2}ds \\
        & = \rn{1} + \rn{2} + \rn{3},
    \end{align*}
    which we will treat separately.

    We consider term $\rn{1}$ first. As $s \in [0,t/2)$, we note that $|t-s| \sim |t|$. Summing over $\{N : N \sim N_1\}$, then applying H\"older's inequality, and then noting that $N_5 \leq N_4$, we may estimate $\rn{1}$ as 
    \begin{align*}
        \rn{1} 
        & \lesssim \sum_{N_5 \leq \dots \leq N_1} N_1 \sup_{t \in [0,T_j)} \int_0^{t/2} \big\|u_{N_1}u_{N_2}u_{N_3}u_{N_4}u_{N_5}(s)\big\|_{L_x^1}ds \\
        & \lesssim \||\nabla|u\|_{\ell_N^1 L_s^\infty L_x^2} \sum_{N_5 \leq \dots \leq N_2}\big\|u_{N_2}u_{N_3}u_{N_4}u_{N_5}(s)\big\|_{L^1_sL_x^2([0,T_j))} \\
        & \lesssim \||\nabla| u\|_{\ell_N^1 L_s^\infty L_x^2} \sum_{N_5 \leq \dots \leq N_2}\Big\|\tfrac{\jb{sN_5}}{N_5} \tfrac{N_4}{\jb{s N_4}}u_{N_2}u_{N_3}u_{N_4}u_{N_5}(s)\Big\|_{L^1_sL_x^2([0,T_j))}.
    \end{align*}
    We now decompose $[0,T_j)$ into $[0,T_{j-1}) \cup I_j$. For $s \in [0,T_{j-1})$, we place $u_{N_5}$ into the bootstrap norm $X(T_{j-1})$. For $s \in I_j$, we instead place $u_{N_5}$ into the bootstrap norm $X(T_j)$. H\"older's inequality then implies
    \begin{align*}
        \rn{1} 
        & \lesssim \|u\|_{\ell_N^1 L_s^\infty L_x^2} \|u\|_{X(T_j)}\sum_{N_4 \leq N_3 \leq N_2}\big\|\tfrac{N_4}{\jb{s N_4}}\big\|_{L_s^2}\big\|u_{N_2}u_{N_3}u_{N_4}(s)\big\|_{L^2_{s,x}(I_j)} \\ 
        & \hspace{11pt} + \|u\|_{\ell_N^1 L_s^\infty L_x^2} \|u\|_{X(T_{j-1})}\sum_{N_4 \leq N_3 \leq N_2}\big\|\tfrac{N_4}{\jb{s N_4}}\big\|_{L_s^2}\big\|u_{N_2}u_{N_3}u_{N_4}(s)\big\|_{L^2_{s,x}} \\
        & \lesssim \|u\|_{\ell_N^1 L_s^\infty L_x^2} \|u\|_{X(T_j)}\sum_{N_4 \leq N_3 \leq N_2}N_4^{\frac{1}{2}}\big\|u_{N_2}u_{N_3}u_{N_4}(s)\big\|_{L^2_{s,x}(I_j)} \\ 
        & \hspace{11pt} + \|u\|_{\ell_N^1 L_s^\infty L_x^2} \|u\|_{X(T_{j-1})}\sum_{N_4 \leq N_3 \leq N_2}N_4^{\frac{1}{2}}\big\|u_{N_2}u_{N_3}u_{N_4}(s)\big\|_{L^2_{s,x}} \\
        & \lesssim \|u\|_{\ell_N^1 L_s^\infty L_x^2} \||\nabla|^{1/2}u\|_{\ell_N^1 L^4_{s,x}(I_j)}\|u\|^2_{\ell_N^1 L^8_{s,x}(I_j)}\|u\|_{X(T_j)} \\
        & \hspace{11pt} + \|u\|_{\ell_N^1 L_s^\infty L_x^2} \||\nabla|^{1/2}u\|_{\ell_N^1 L^4_{s,x}}\|u\|^2_{\ell_N^1 L^8_{s,x}}\|u\|_{X(T_{j-1})}.
    \end{align*}
    Proposition \ref{sb/sb} and the smallness condition \eqref{edge/smallness} then imply that
    \begin{equation}\label{edge/I}
        \rn{1} \lesssim C\big(\|(u_0,u_1)\|_{\dot{B}^1_{2,1} \times \dot{B}^0_{2,1}}\big)\Big[\eta^3\|u\|_{X(T_j)} + \|u\|_{X(T_{j-1})}\Big]
    \end{equation}
    which is acceptable for the bootstrap statement \eqref{edge/bootstrap}.

    We now consider term $\rn{2}$. We note that for $s \in [t/2,t)$, $|s| \sim |t|$. H\"older's inequality then implies that
    \begin{align*}
        \rn{2} & \sim \sum_{\substack{N_1 \sim N_2 \gtrsim N \\ N_5 \leq \dots \leq N_1}} N \sup_{t \in [0,T_j)}\int_{t/2}^t \jb{(t-s)N}^{-1} \jb{sN}\big\|u_{N_1}u_{N_2}u_{N_3}u_{N_4}u_{N_5}(s)\big\|_{L_x^1}ds \\
        & \lesssim \sum_{\substack{N_1 \sim N_2 \gtrsim N \\ N_5 \leq \dots \leq N_1}} N^\frac{1}{2} \Big\|\jb{sN}\big\|u_{N_1}u_{N_2}u_{N_3}u_{N_4}u_{N_5}(s)\big\|_{L_x^1}\Big\|_{L_s^2([0,T_j))}.
    \end{align*}
    We bound $\jb{sN} \lesssim \jb{sN_1}$ and then sum over $\{N : N \lesssim N_1\}$. Noting that $N_1 \sim N_2$ and that $N_5 \leq N_1$, we may then estimate
    \begin{align*}
        \rn{2} & \lesssim \sum_{\substack{N_1 \sim N_2\\ N_5 \leq \dots \leq N_1}} N_1^\frac{3}{2} \Big\|\tfrac{\jb{sN_1}}{N_1}\big\|u_{N_1}u_{N_2}u_{N_3}u_{N_4}u_{N_5}(s)\big\|_{L_x^1}\Big\|_{L_s^2([0,T_j))} \\
        & \lesssim \sum_{\substack{N_1 \sim N_2\\ N_5 \leq \dots \leq N_1}} N_1N_2^\frac{1}{2} \Big\|\tfrac{\jb{sN_5}}{N_5}\big\|u_{N_1}u_{N_2}u_{N_3}u_{N_4}u_{N_5}(s)\big\|_{L_x^1}\Big\|_{L_s^2([0,T_j))}.
    \end{align*}
    
    We now decompose $[0,T_j)$ into $[0,T_{j-1})\cup I_j$. Applying H\"older's inequality, we may place $u_{N_5}$ into the bootstrap norm $X(T_{j-1})$ for $s \in [0,T_{j-1})$ and into $X(T_j)$ for $s \in I_j$. Proposition \ref{sb/sb} and \eqref{edge/smallness} then imply that
    \begin{equation}\begin{split}\label{edge/II}
        \rn{2} & \lesssim \|u\|_{X(T_{j-1})}\||\nabla| u\|_{\ell_N^1 L_s^\infty L_x^2} \||\nabla|^{\frac{1}{2}}u\|_{\ell_N^1 L^4_{s,x}} \|u\|^2_{\ell_N^1 L^8_{s,x}}\\
        & \hspace{11pt} + \|u\|_{X(T_{j})}\||\nabla| u\|_{\ell_N^1 L_s^\infty L_x^2} \||\nabla|^{\frac{1}{2}}u\|_{\ell_N^1 L^4_{s,x}(I_j)} \|u\|^2_{\ell_N^1 L^8_{s,x}(I_j)} \\
        & \lesssim C\big(\|(u_0,u_1)\|_{\dot{B}^1_{2,1} \times \dot{B}^0_{2,1}}\big)\Big[\eta^3\|u\|_{X(T_j)} + \|u\|_{X(T_{j-1})}\Big],
    \end{split}\end{equation}
    which is acceptable for \eqref{edge/bootstrap}.

    We finally consider $\rn{3}$. Summing over $\{N : N \sim N_1\}$ and noting that $N_1 \geq N_5$, we may estimate
    \begin{align*}
        \rn{3} & \sim \sum_{\substack{N_1 \sim N \\ N_5 \leq \dots \leq N_1}} N_1^{\frac{1}{2}}\sup_{t \in [0,T_j)}\int_{t/2}^t \tfrac{\jb{sN_1}}{N_1} \big\|u_{N_1}u_{N_2}u_{N_3}u_{N_4}u_{N_5}(s)\big\|_{L_x^2}ds \\
        & \lesssim \sum_{\substack{N_1 \sim N \\ N_5 \leq \dots \leq N_1}} N_1^{\frac{1}{2}}\Big\|\tfrac{\jb{sN_5}}{N_5}\|u_{N_5}(s)\|_{L_x^\infty}\big\|u_{N_1}u_{N_2}u_{N_3}u_{N_4}(s)\big\|_{L_x^2}\Big\|_{L^1_s([0,T_j))}.
    \end{align*}
    We now decompose $[0,T_j) = [0,T_{j-1})\cup I_j$. By H\"older's inequality, we may place $u_{N_5}$ into the bootstrap norm $X(T_{j-1})$ for $s \in [0,T_{j-1})$ and into $X(T_j)$ for $s \in I_j$. We then find that
    \begin{equation}\begin{split}\label{edge/III}
        \rn{3} 
        & \lesssim \|u\|_{X(T_{j-1})}\||\nabla|^{\frac{1}{2}} u\|_{\ell_N^1 L^4_{s,x}}\|u\|^3_{\ell_N^1 L_s^4 L_x^{12}} + \|u\|_{X(T_j)}\||\nabla|^{\frac{1}{2}} u\|_{\ell_N^1 L^4_{s,x}(I_j)}\|u\|^3_{\ell_N^1 L_s^4 L_x^{12}(I_j)} \\
        & \lesssim C\big(\|(u_0,u_1)\|_{\dot{B}^1_{2,1} \times \dot{B}^0_{2,1}}\big)\Big[\eta^3\|u\|_{X(T_j)} + \|u\|_{X(T_{j-1})}\Big],
    \end{split}\end{equation}
    which is acceptable for \eqref{edge/bootstrap}.

    Combining the estimates \eqref{edge/I}, \eqref{edge/II}, and \eqref{edge/III} with the linear estimate \eqref{edge/linear} then implies the bootstrap statement \eqref{edge/bootstrap}. Along with earlier considerations, this concludes the proof of Theorem \ref{edge}.
\end{proof}

\bibliographystyle{abbrv}
\bibliography{references}
\end{document}